\documentclass[12pt]{article}
\RequirePackage{fix-cm}
\usepackage[T1]{fontenc}
\usepackage[utf8]{inputenc}
\usepackage{lmodern}
\usepackage{textcomp}
\usepackage{microtype}
\usepackage{authblk}
\usepackage{amsmath}
\usepackage{amsfonts}
\usepackage{amssymb}
\usepackage{amsthm}
\usepackage{latexsym}
\usepackage{mathtools}
\usepackage{multirow}
\usepackage{array}
\usepackage{tabulary}
\usepackage{cases}

\usepackage{geometry} 
\usepackage[toc,page]{appendix}
\usepackage{lineno} 

\usepackage{xcolor}
\usepackage{float} 
\usepackage{placeins} 

\usepackage{csquotes}
\usepackage{url}
\usepackage{natbib}
\setcitestyle{numbers,open={[},close={]}}
\newcommand{\textcite}{\cite}

\usepackage{graphicx}
\usepackage{setspace}

\usepackage{thmtools}

\declaretheorem[
	name=Theorem
	]{theorem}
\declaretheorem[
	name=Lemma,
	]{lemma}

\newcommand{\ga}{\gamma}

\newcommand{\rh}{\rho}

\newcommand{\ze}{\zeta}

\newcommand\PinfNB{P_{\rm{NB}}^\infty}

\newcommand\fNB{f_{\rm{NB}}}
\newcommand\gNB{g_{\rm{NB}}}

\newcommand\varphiFL{\varphi_{\rm{FL}}}

\newcommand\varphiNB{\varphi_{\rm{NB}}}

\newcommand\rNB{\rh_{\rm{NB}}}
\newcommand\pNB{\pi_{\rm{NB}}}

\newcommand\zet{{\tilde\ze}}

\newcommand\yx{y(x)}
\newcommand\cgt{c_{\tilde g}}
\defcitealias{Agresti1974}{Agresti's (1974)}
\defcitealias{Athreya1992}{Athreya's (1992)}
\defcitealias{Haldane1927}{Haldane's (1927)}
\defcitealias{Hill1982a}{Hill's (1982)}
\defcitealias{Hoellinger2019}{H\"ollinger's (2019)}
\defcitealias{Lande1976}{Lande's (1976) equation}
\defcitealias{Mathematica}{{\it Mathematica}}
\defcitealias{Quine1976}{Quine's (1976)}
\defcitealias{Pollak1971}{Pollak's (1971)}

\begin{document}
\normalbaselineskip18pt
\baselineskip18pt
\global\hoffset=-10truemm
\global\voffset=-10truemm
\allowdisplaybreaks[2]
\setcounter{page}{1}
%
\title{\bf Addendum to:\\ Bounds for survival probabilities in \\ supercritical Galton-Watson processes and \\ applications to population genetics 
\vphantom{T}
}
\author{\bf Reinhard B\"urger}

\affil{Faculty of Mathematics \\ University of Vienna \\ 1090 Vienna, Austria  \\
\vspace{0.5cm} 
Email: reinhard.buerger@univie.ac.at \\
{\small ORCID: 0000-0003-2174-4968}
\date{}
}
%

\maketitle

\vspace{0.2cm}


\noindent
{\bf Abstract.} In this addendum we extend Theorem 4.6 on the negative binomial distribution in \cite{Buerger2026}. We  prove that the fractional linear lower bound to the negative binomial generating function derived there is indeed valid for every $x\in[0,1]$, and not only for $x\in[0,\PinfNB]$, where $\PinfNB$ is the extinction probability of the associated Galton-Watson process.

\vspace{2.2cm}

\noindent{\small
{\bf Keywords:} Extinction probability, negative binomial offspring distribution, fractional linear generating function }

\vspace{0.1cm}
\noindent{\small
{\bf Mathematics Subject Classification.} Primary: 60J80; Secondary: 92D10, 92D15, 60J85 }

\newpage

We assume that the reader is familiar with the content and notation in \citep{Buerger2026}. Here is the generalized version of Theorem 4.6 in \citep{Buerger2026}.

\begin{theorem}\label{thm:NB_ge_FL}
For every $r\ge2$ and every $\ze\in(0,1)$, the pgfs $\varphiNB$ and $\varphiFL$ satisfy
\begin{linenomath}\begin{equation}\label{varphiNB_ge_varphiFL}
	\varphiFL(x;\pNB,\rNB) \le \varphiNB\Bigl(x;r,\frac{\ze(1-\ze^r)}{1-\ze^{r+1}}\Bigr) \;\text{ for every }x\in[0,1]\,.
\end{equation}\end{linenomath}
Equality holds if and only if $x = \PinfNB$ or $x=1$.
\end{theorem}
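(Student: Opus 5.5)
I would work with the difference of the two generating functions and clear the denominator of the fractional linear one. I take $\varphiNB(x;r,q)=\bigl(\tfrac{1-q}{1-qx}\bigr)^r$ with $q=\ze(1-\ze^r)/(1-\ze^{r+1})$, and write $\varphiFL(x;\pNB,\rNB)=(a+bx)/(1-cx)$, with $a,b,c$ determined by $\pNB,\rNB$ as in \citep{Buerger2026}. Since $1-cx>0$ on $[0,1]$, inequality \eqref{varphiNB_ge_varphiFL} is equivalent to $G(x)\ge0$, where
\[
G(x)=(1-cx)\,\varphiNB(x;r,q)-(a+bx).
\]
By construction both pgfs take the value $1$ at $x=1$. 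The lower bound was built to pass through the fixed point $\PinfNB$. Hence $G(\PinfNB)=G(1)=0$, and Theorem 4.6 of \citep{Buerger2026} already gives $G>0$ on $[0,\PinfNB)$. It remains to treat $(\PinfNB,1)$.

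The key structural step is a shape argument for $G$. With $y=(1-qx)^{-r}$ one has $G''=(1-cx)y''-2cy'$. Substituting $y'=rq(1-qx)^{-r-1}$ and $y''=r(r+1)q^2(1-qx)^{-r-2}$ gives
\[
G''(x)=rq(1-q)^r(1-qx)^{-r-2}\bigl[(r+1)q-2c-(r-1)qc\,x\bigr].
\]
The bracket is affine in $x$, and for $r\ge2$ and $c>0$ it is strictly decreasing. So $G''$ changes sign at most once, from $+$ to $-$: $G$ is convex and then concave on $[0,1]$. Consequently, by Rolle's theorem, $G$ has at most three zeros in $[0,1]$ counted with multiplicity. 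Moreover, $G$ cannot have a negative dip between two zeros in its concave part, nor a positive bump between two zeros in its convex part.

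I would then pin down the local behaviour at the two known zeros. First, I would compute $G'(\PinfNB)$ using the explicit parametrisation in $\ze$, since $\PinfNB$ should be expressible through $\ze$ (presumably $\PinfNB=\ze$ or a simple function of it). If the construction in \citep{Buerger2026} makes $\varphiFL$ tangent to $\varphiNB$ at $\PinfNB$, then $\PinfNB$ is a double zero. Together with the zero at $1$, this exhausts the three zeros allowed. Since $G>0$ just left of $\PinfNB$ and the zero is of even order, $G>0$ just right of $\PinfNB$ as well, hence on all of $(\PinfNB,1)$. This also yields the equality statement.

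If instead $\PinfNB$ is a simple zero, I would use the sign of $G'(1)$. I expect $G'(1)<0$, i.e.\ $\varphiNB'(1)>\varphiFL'(1)$, a mean comparison that can be written as an explicit inequality in $\ze$ and $r$. Then $G>0$ just left of $1$. Combined with $G\ge0$ near $\PinfNB$ on the right and the convex–concave shape, this rules out a negative excursion in between.

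The main obstacle is this derivative computation: verifying in closed form either the tangency at $\PinfNB$ or the sign of $G'(1)$ (and possibly $G''$ there) for all $r\ge2$ and $\ze\in(0,1)$. My first attempt would be to write everything in terms of $\ze$, factor out powers of $(1-\ze)$, and reduce the claim to positivity of a polynomial in $\ze$ with $r$ as a parameter. I would prove that positivity by induction on $r$ or by grouping terms into sums such as $\sum_{k}\ze^k$ with nonnegative coefficients.
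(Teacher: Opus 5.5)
Your convex--concave argument is sound and proves the theorem by a route different from the paper's. The one fact you left conditional does hold. By (B.3), $\varphiNB'(x)=r\ze^r(1+\yx)^{-r-1}/\zet$, so $\varphiNB'(\PinfNB)=r\ze^r/\zet$. In (B.4), numerator and denominator both equal $1-\ze^r$ at $x=\ze^r$, and their derivatives differ by $r(1-\ze^r)/\zet$; hence $\varphiFL'(\PinfNB)=r\ze^r/\zet$ as well. This tangency is also visible as the factor $y^2$ in (B.10). So $\PinfNB$ is a zero of $G$ of multiplicity at least two. Your count (at most one zero of $G''$, hence at most three zeros of $G$ with multiplicity) then forces multiplicity exactly $2$ at $\PinfNB$, exactly $1$ at $1$, and no other zeros in $[0,1]$. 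This gives the inequality on $(\PinfNB,1)$ and the equality statement.

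Drop the fallback branch. A simple zero at $\PinfNB$ with $G>0$ on the left would make $G<0$ just to the right, so nothing at $x=1$ could rescue it. Its sign is also reversed: $G'(1)=(1-c)\bigl(\varphiNB'(1)-\varphiFL'(1)\bigr)$, so $G'(1)<0$ means $\varphiNB'(1)<\varphiFL'(1)$. That inequality is in fact true. Indeed, $\varphiNB'(1)=r/(\ze\zet)$ and $\varphiFL'(1)=\zet/(r\ze^r)$, and $r^2\ze^{r-1}<\zet^2$ is AM--GM, the same estimate used in (B.2). Combined with your zero count, it fixes the sign of $G$ on $[0,1)\setminus\{\PinfNB\}$, so you do not even need Theorem~4.6 of the earlier paper.

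Also fix the parameterisation. With the convention behind (B.3), the pgf is $\bigl(p/(1-(1-p)x)\bigr)^r$ with $p=\ze(1-\ze^r)/(1-\ze^{r+1})$, so your $q$ must be $1-p=(1-\ze)/(1-\ze^{r+1})$. The shape step only uses $q>0$ and $c=(\zet-r\ze^r)/(\zet-r\ze^{2r})\in(0,1)$, so it is unaffected.

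Compared with the paper: there, the case $x\in(\PinfNB,1]$ is handled by substituting $u=(1-x)(1-\ze)/(1-\ze^r)$ and showing that $\tilde g_{\rm NB}$ is a polynomial in $u$ and $\ze$ with positive coefficients. That requires the lengthy combinatorial lemma on $\cgt(r,k,n)$: alternating-sum identities, convexity in $n$ of one part, and checks of boundary values. Your route uses only structure: $\varphiNB$ is a power of a fractional linear map, $\varphiFL$ is fractional linear, and the two are tangent at the fixed point. It treats all of $[0,1]$ uniformly in a few lines and yields the equality case for free. What the paper's method buys is an explicit coefficientwise positivity certificate. That gives quantitative control of the gap and reproduces the explicit small-$r$ formulas of the earlier paper, whereas your argument gives only the sign.
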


In the following, we first repeat with trivial adaptations the proof of the above theorem for $x\in[0,\PinfNB]$, as given in Appendix B in \cite{Buerger2026}. In particular, we retain the equation numbering from \cite{Buerger2026} until eq.~(B.11). Then the new proof starts.

We recall that $\ze^r=\PinfNB \in(0,1)$ and define
\begin{linenomath}\begin{equation}\label{zet_y_defs}
	\zet = \sum_{k=0}^{r-1} \ze^k \; \text{ and }\; \yx = \frac{\ze^r-x}{\zet} \,. \tag{B.1}
\end{equation}\end{linenomath}
Using eq.~(A.2) in \cite{Buerger2026}, we observe that
\begin{equation}\label{y_bounds}
	y(0)=\frac{\ze^r}{\zet} < \frac{1}{r}\ze^{(r+1)/2}<\frac{1}{r} \,, \; y(\ze^r)=0 \,, \; y(1) = \frac{\ze^r-1}{\zet} = -(1-\ze) \,.  \tag{B.2}
\end{equation}
With these abbreviations, we can express the negative binomial generating function $\varphiNB$ (defined in eq.~(4.23) in \citep{Buerger2026}) and the (prospective) bounding fractional linear generating function $\varphiFL$ (defined in eq.~(2.9) in \citep{Buerger2026}), where $\pNB$ and $\rNB$ are given in eq.~(4.25) in \cite{Buerger2026}, as follows:
\begin{linenomath}\begin{equation}
 \varphiNB\Bigl(x;r,\frac{\ze(1-\ze^r)}{1-\ze^{r+1}}\Bigr) = \ze^r\,\left(\frac{1}{1+\yx} \right)^r  \tag{B.3}
\end{equation}\end{linenomath}
and
\begin{linenomath}\begin{equation}
	\varphiFL(x;\pNB,\rNB) = \ze^r\,\frac{1-x - r\yx}{1-x - r\ze^r \yx}\,. \tag{B.4}
\end{equation}\end{linenomath}
We note that numerator and denominator are always positive.

We define
\begin{equation}\label{def_fNB}
	\fNB(x;r,\ze) := \ze^r \left(1/\varphiFL(x;\pNB,\rNB) -  1/\varphiNB\Bigl(x;r,\frac{\ze(1-\ze^r)}{1-\ze^{r+1}}\Bigr) \right) \tag{B.5}
\end{equation}
and
\begin{equation}\label{def_gNB}
	\gNB(y(x);r,\ze):=\fNB(x;r,\ze)\frac{1-x-r y(x)}{(1-\ze)^2} \,, \tag{B.6}
\end{equation}
where $1-x-r y(x)>0$.

\begin{proof}[{Proof of the inequality \eqref{varphiNB_ge_varphiFL} if $x\in[0,\PinfNB]$ (this was already proved in \cite{Buerger2026})} ] 
Proving this inequality is equivalent to showing that $\gNB(y(x);r,\ze)>0$ for $0<y(x)<\ze^r/\zet$, $r\ge2$, and $0<\ze<1$. 
In the following we write $y=y(x)$. Using the transformation $x= \ze^r - y\frac{1-\ze^r}{1-\ze}$, we obtain 
\begin{equation}\label{def_gNB_y}
	\gNB(y;r,\ze) = \fNB\Bigl(\ze^r - y\frac{1-\ze^r}{1-\ze};r,\ze\Bigr) \frac{(1-\ze)(1-\ze^r)+y(1-\ze^r-r(1-\ze))}{(1-\ze)^3}\,, \tag{B.7}
\end{equation}
which we can rewrite as
\begin{equation} 
	\gNB(y) 
	 = \frac{y\bigl((1+y)^r-1\bigr)\bigl(r(1-\ze)-(1-\ze^r)\bigr) - (1-\ze)(1-\ze^r) \Bigl(\bigl((1+y)^r-1\bigr) - ry \Bigr)}{(1-\ze)^3} \,.  \tag{B.8}
\end{equation}

By the binomial expansion $(1+y)^r-1 = y\sum_{j=0}^{r-1}\binom{r}{j+1}y^j$ and after collection of coefficients of $y^j$, we obtain
\begin{subequations}
\begin{align}
	\gNB(y) &= \frac{y^2}{(1-\ze)^2} \sum_{j=0}^{r-1} y^j\left( \binom{r}{j+1}\Bigl( r-\frac{1-\ze^r}{1-\ze}\Bigr) - \binom{r}{j+2}(1-\ze^r) \right) \tag{B.9a} \\
	&= y^2\sum_{j=0}^{r-1} y^j \binom{r}{j+1}\frac{1}{(1-\ze)^2}\left(\Bigl( r-\frac{1-\ze^r}{1-\ze}\Bigr) - \frac{r-j-1}{j+2}(1-\ze^r) \right) \,. \tag{B.9b}
\end{align}
\end{subequations}
Finally, expansion in terms of $\ze$ yields after appropriate rearrangement
\begin{equation}\label{gNB_final}
	\gNB(y;r,\ze) = y^2 \sum_{j=0}^{r-1} y^j \binom{r}{j+1} c_g(r,j,\ze) \,, \tag{B.10}
\end{equation}
where
\begin{equation}\label{c_g}
	c_g(r,j,\ze) = \frac{1}{2(j+2)}\left( \sum_{k=0}^{r-2} \ze^k(k+1)[2r(1+j)-(2+j)k-2] + \frac{\ze^{r-1}}{1-\ze}r(r+1)j  \right)\,.  \tag{B.11}
\end{equation}
Because $2r(1+j)-(2+j)k-2 \ge 2 + j(2r-k) \ge 2$, we obtain $c_g(r,j,\ze) > 0$ for every $0\le j\le r-1$ and every $\ze\in(0,1)$. Therefore, $\gNB(y;r,\ze) >0$ if $y>0$ and $\fNB(x;r,\ze)>0$ if $0\le x < \ze^r$.
\end{proof}

\setcounter{equation}{1}

\begin{proof}[{Proof of the inequality \eqref{varphiNB_ge_varphiFL} if $x\in(\PinfNB,1]$} ] 
We use the transformation $y=u-(1-\ze)$ which, by \eqref{y_bounds}, is equivalent to $u = (1-x)\frac{1-\ze}{1-\ze^r}$. Then, $0\le u \le 1-\ze$ if $\PinfNB = \ze^r \le x \le 1$. 
We use series expansion of 
\begin{equation}\label{def_tilde_gNB_u}
	\tilde g_{\rm NB}(u;r,\ze) := \frac{1-\ze}{(u-(1-\ze))^2}\gNB(u-(1-\ze);r,\ze) 
\end{equation}
in $u$ and $\ze$ to prove that $\tilde g_{\rm NB}(u;r,\ze)\ge0$ if $0\le u \le 1-\ze$ and $0<\ze<1$. The key element of the proof is the subsequent lemma.

\begin{lemma}\label{lem:cgt}
The following representation holds:
\begin{equation}
	\tilde g_{\rm NB}(u;r,\ze) = \sum_{k=1}^{r-1}\sum_{n=0}^{2r-3-k} u^k \ze^n \cgt(r,k,n) \,,
\end{equation}
where
\begin{subnumcases}
	{\cgt(r,k,n)  = \label{cgt_final}} \textstyle
		k \binom{n+k+2}{k+2}  & if $0\le n \le r-1-k$\,, \vspace{2mm} \label{cgt_final_a} \\
		\textstyle
		\binom{r}{k+1}\frac{(2r-k)(k+1)-(n+1)(k+2)}{k+2} & if $r-k \le n \le r-1$\,, \vspace{2mm} \label{cgt_final_b} \\
		\textstyle
		\binom{r}{k+1}\frac{(2r-k)(k+1)-(n+1)(k+2)}{k+2} \vspace{2mm} \notag \\
		\textstyle
		\quad+ \binom{k+(n-r)+1}{k+1}\frac{(2r-k)(k+1)-(n+1)k}{k+2} & if $r \le n \le 2r-3-k$ \,. \label{cgt_final_c} 
\end{subnumcases}
\end{lemma}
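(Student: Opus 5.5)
We plan to start from the closed form (B.8) for $\gNB(y)$ rather than from the series (B.10). The series in $y$ is of no use on $y\in[-(1-\ze),0]$, because there the signs alternate. Substituting $y=u-(1-\ze)$ gives $1+y = u+\ze$. Write $A=r(1-\ze)-(1-\ze^r)$ and $B=(1-\ze)(1-\ze^r)$. Then
\begin{equation*}
(1-\ze)^3\gNB = (u-(1-\ze))\bigl((u+\ze)^r-1\bigr)A - B\bigl((u+\ze)^r-1-r(u-(1-\ze))\bigr).
\end{equation*}
Both $A$ and $B$ are divisible by $(1-\ze)^2$ and $(1-\ze)$ respectively: $A/(1-\ze)^2=\sum_{k=0}^{r-2}(k+1)\ze^{r-2-k}$ and $B/(1-\ze)=\zet$. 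Since $\gNB$ vanishes to second order at $y=0$, i.e.\ at $u=1-\ze$, the quotient defining $\tilde g_{\rm NB}$ in the definition preceding the lemma is a polynomial in $u$ and $\ze$. The task is therefore to carry out this polynomial division explicitly and to identify its coefficients.

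The first step is to expand $(u+\ze)^r-1=\sum_{k\ge1}\binom rk u^k\ze^{r-k}-(1-\ze^r)$ and collect powers of $u$. This gives $(1-\ze)^3\gNB=\sum_k u^k P_k(\ze)$, where each $P_k$ is an explicit polynomial in $\ze$ built from $\binom rk\ze^{r-k}$, $\binom r{k-1}\ze^{r-k+1}$, $A$ and $B$. Next, instead of dividing by $(u-(1-\ze))^2$, we multiply the claimed double sum $\sum u^k\ze^n\cgt(r,k,n)$ by $(u-(1-\ze))^2(1-\ze)^2$ and verify that it equals $(1-\ze)^3\gNB$ coefficientwise in $u^k\ze^n$. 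Since $(u-(1-\ze))^2=u^2-2u(1-\ze)+(1-\ze)^2$, each coefficient of $u^k\ze^n$ on the right is a short linear combination of $\cgt(r,k',n')$ with $k'\in\{k-2,k-1,k\}$ and $n'\in\{n,\dots,n-4\}$. We also need the truncation facts: the top power $u^{r-1}$ with total degree at most $2r-3-k$ in $\ze$, and the vanishing of the $u^0$ term. These follow from degree counting, together with the observation that $\tilde g$ vanishes at $u=0$, because $\gNB$ vanishes at $y=-(1-\ze)$, i.e.\ at $x=1$, where equality holds.

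An alternative, which may be cleaner, is to divide by $(1-\ze)^{-2}(u-(1-\ze))^{-2}$ via generating functions. Expand $1/(1-\ze-u)^2=\sum_m (m+1)u^m/(1-\ze)^{m+2}$ only formally, or better use the identity $(u+\ze)^r-1-r(u+\ze-1)=(u+\ze-1)^2\sum_{j}(r-1-j)\ldots$. That is, write $\frac{t^r-1-r(t-1)}{(t-1)^2}=\sum_{m=0}^{r-2}(m+1)t^{r-2-m}$ with $t=u+\ze$, and similarly $\frac{t^r-1}{t-1}=\sum_{m=0}^{r-1}t^m$. Then $\tilde g_{\rm NB}$ becomes a finite combination of powers $(u+\ze)^m$ times $A/(1-\ze)^2$ and $\zet$, minus correction terms. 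Expanding the powers $(u+\ze)^m$ binomially and collecting $u^k\ze^n$ yields sums of binomial coefficients over a range of $m$ determined by $n$. These are evaluated by hockey-stick identities $\sum_m\binom mk=\binom{M+1}{k+1}$ and $\sum_m m\binom{m}{k}$-type identities, and this produces the three regimes (a), (b), (c). The break at $n=r-1-k$ comes from the upper limit $m\le r-1$ of the sum becoming active, and the break at $n=r$ comes from the factor $\ze^{r-1}$-type contributions in $A$ and $\zet$ overlapping the shifted range.

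We expect the main obstacle to be this bookkeeping: correctly determining the summation ranges in each regime of $n$ and simplifying the resulting hockey-stick sums into exactly the stated rational-binomial forms. We will guide and check the case boundaries by comparing with small $r$ (say $r=2,3$) computed directly from (B.8).
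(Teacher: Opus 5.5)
Your second route is correct, and it differs genuinely from the paper's. The paper starts from the $y$-series (B.10), substitutes $y=u-(1-\ze)$ and expands each $(u-(1-\ze))^j$. This produces alternating sums, which it evaluates through the auxiliary sums $\ga_1,\dots,\ga_4$ and several further alternating binomial identities in Steps 2--4, some left to the reader or to a computer check.

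You instead divide the closed form (B.8) exactly by $(t-1)^2$, with $t=u+\ze$, using the two geometric-sum identities. Carried out, this gives the following formula, with no correction terms:
\[
\tilde g_{\rm NB}(u;r,\ze)=\Bigl(\sum_{i=0}^{r-2}(r-1-i)\ze^i\Bigr)\sum_{m=0}^{r-1}(u+\ze)^m-\zet\sum_{q=0}^{r-2}(r-1-q)(u+\ze)^q .
\]
Collecting $u^k\ze^n$, with $p$ the power of $\ze$ taken from $(u+\ze)^{k+p}$, yields a single non-alternating sum:
\[
\cgt(r,k,n)=\sum_{p=\max\{0,\,n-r+1\}}^{\min\{n,\,r-1-k\}}\binom{k+p}{k}(k+2p-n).
\]
Two hockey-stick identities finish the job: $\sum_{p=0}^{P}\binom{k+p}{k}=\binom{k+P+1}{k+1}$ and $\sum_{p=0}^{P}p\binom{k+p}{k}=(k+1)\binom{k+P+1}{k+2}$. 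They give (a) with $P=n$, (b) with $P=r-1-k$, and (c) as the (b)-expression minus the partial sum with $P=n-r$. The breakpoints are exactly the ones you predicted.

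The truncation also falls out of this formula. The $k=0$ coefficients vanish because $2p-n$ is antisymmetric under $p\mapsto n-p$ on a symmetric range. Coefficients with $n\ge 2r-2-k$ vanish because the range is a single point with zero summand at $n=2r-2-k$ and empty beyond.

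Two corrections:
\begin{itemize}
\item $B=(1-\ze)^2\zet$, so it is $B/(1-\ze)^2$, not $B/(1-\ze)$, that equals $\zet$.
\item Your first plan, multiplying back by $(1-\ze)^2(u-(1-\ze))^2$, is unnecessary.
\end{itemize}

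What your route buys beyond brevity is positivity. In regime (c) the summation range $[n-r+1,\,r-1-k]$ is invariant under $p\mapsto n-k-p$, which reverses the sign of $k+2p-n$. Hence, for $k\ge1$,
\[
\cgt(r,k,n)=\tfrac12\sum_{p=n-r+1}^{r-1-k}(k+2p-n)\Bigl[\binom{k+p}{k}-\binom{n-p}{k}\Bigr]>0 ,
\]
which would replace the paper's convexity argument for positivity. The paper's route reuses the already established series (B.10), but pays for it with much heavier alternating-sum bookkeeping.
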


We will prove this lemma below. Obviously, the expression in \eqref{cgt_final_a} is positive.
The expression $(2r-k)(k+1)-(n+1)(k+2)$ in \eqref{cgt_final_b} and \eqref{cgt_final_c} is positive if $ r-k \le n \le r-1$. However, it can be negative if $r \le n \le 2r-k-3$. 

To finish the proof of inequality \eqref{varphiNB_ge_varphiFL} if $\PinfNB < x <1$, it remains to show that the sum of the expressions in \eqref{cgt_final_c} is positive. We start by noting that this case applies only if $r\ge4$ and $k\le r-3$. 
Two special cases emerging readily from \eqref{cgt_final_c} are
\begin{align*}
	\cgt(r,k,r) &= \binom{r}{k+1}\frac{(r-k-2)k-2}{k+2} + r-k \,, \\
	\cgt(r,k,2r-3-k) &= \binom{r}{k+1}\frac{(k+1)k}{r(r-1)} = \binom{r-2}{k-1} \,.
\end{align*}
Both are positive because $1 \le k \le r-3$ and $r\ge4$. We note in passing that $\cgt(r,k,2r-i-k)=0$ if $i\in\{0,1,2\}$.

We observe that the term $c_{\tilde g,a}(r,k,n):=\binom{r}{k+1}\frac{(2r-k)(k+1)-(n+1)(k+2)}{k+2}$ is (linearly) decreasing in $n$ (and becomes negative for large $n)$.
Straightforward algebra shows that the term $c_{\tilde g,b}(r,k,n):=\binom{k+(n-r)+1}{k+1}\frac{(2r-k)(k+1)-(n+1)k}{k+2}$ is positive and increasing in $n$ under the given constraints. Moreover, the second-order forward difference is computed to 
\begin{equation}
	c_{\tilde g,b}(r,k,n+2)-c_{\tilde g,b}(r,k,n)-2c_{\tilde g,b}(r,k,n+1) = \binom{k+(n-r)+1}{k}\frac{(2r-n-k-2)k}{n-r+2}\,.
\end{equation}
This is positive if $r\le n \le 2r-k-3$, equals 0 if $n=2r-k-2$, and is negative for larger $n$.
We obtain that (i) $c_{\tilde g,b}(r,k,n)$ is convex, increasing, and positive if $r\le n \le 2r-k-3$, (ii) $c_{\tilde g,a}(r,k,n)$ is linear (and decreasing), (iii) the first-order differences of $c_{\tilde g,b}$ reach their maximum at $n=2r-k-2$, where they equal $\binom{r}{k+1}$, which is the negative of the first-order difference of $c_{\tilde g,a}$, and (iv) $\cgt(r,k,r)>0$ and $\cgt(r,k,2r-k-3)>0$. Therefore, we conclude that $\cgt(r,k,n)>0$ holds in this range.
 
As consequence, $\tilde g_{\rm NB}(u;r,\ze)$ is strictly positive if $0<u<1-\ze$. By \eqref{def_tilde_gNB_u} and \eqref{def_gNB_y}, this implies the desired positivity of $\fNB(x)$ if $\PinfNB < x <1$.
Pending the proof of Lemma \ref{lem:cgt}, this finishes the proof of inequality \eqref{varphiNB_ge_varphiFL} for $x\in(\PinfNB,1]$ and hence the proof of Theorem \ref{thm:NB_ge_FL}.
\end{proof}

Of course, the expressions given in Lemma \ref{lem:cgt} yield those in eq.~(12) in \cite{Buerger2026} for $r=2,3,4,5$.

\begin{proof}[Proof of Lemma \ref{lem:cgt}]
It follows readily from the definition of $\tilde g_{\rm NB}$ and that of $g_{\rm NB}$ that only the coefficients listed are nonzero, i.e., no higher-order terms can occur. The proof is structured in several steps. 

STEP 1. We derive the representation
\begin{subequations}\label{cgt_all}
\begin{equation}\label{cgt}
	\cgt(r,k,n) = \begin{cases} \sum\limits_{m=0}^{r-1-k} \binom{m+k}{k} \binom{r}{m+k+1} c_{\tilde g_1}(r,k,n,m)  &\text{ if } 0 \le n \le r-1\,, \vspace{3mm} \\
	  \sum\limits_{m=n-r+1}^{r-1-k} \binom{m+k}{k} \binom{r}{m+k+1} c_{\tilde g_2}(r,k,n,m)  &\text{ if } r \le n \le 2r-3-k\,,	\end{cases}
\end{equation}
where,
\begin{equation}\label{cgt1}
	c_{\tilde g_1}(r,k,n,m) = \begin{cases} (-1)^m \Bigl(r - \frac{r+1}{m+k+2} \Bigr) &\text{ if } n=0	\,,  \vspace{2mm}  \\
	(-1)^{m-n}\biggl[ \binom{m-1}{n} \Bigl(r - \frac{r+1}{k+m+2} \Bigr)  + \binom{m-2}{n-1} \biggr] &\text{ if } 1 \le n \le r-1\,,\end{cases} 
\end{equation}
and
\begin{equation}\label{cgt2}
	c_{\tilde g_2}(r,k,n,m)=\begin{cases} 1- \frac{r+1}{k+3} &\text{ if  $n=r$  and  $m=1$}\,,  \vspace{2mm}  \\ 
		(-1)^m \frac{r+1}{k+2+m} &\text{ if  $n=r$  and  $m>1$} \,, \vspace{2mm}  \\
		 (-1)^{m-(n-r)}\Bigl[\binom{m-1}{n-r} \frac{r+1}{k+m+2} - \binom{m-2}{n-r-1}\Bigr]&\text{ if } r+1 \le n \le 2r-3-k \,. \end{cases}
\end{equation}
\end{subequations}
We note that $c_{\tilde g_1}(r,k,n,0) = r-n-\frac{r+1}{k+2}$ because, in case $n\ge1$, we have $(-1)^{0-n}\binom{0-2}{n-1} = -n$. 
The coefficients $c_{\tilde  g_2}(r,k,n,m)$ vanish if $n\ge 2r-2-k$ because then the summation in the second case of \eqref{cgt} includes only $m=r-1-k$ and the binomial coefficients $\tbinom{m-1}{n}$ and $\tbinom{m-2}{n-1}$ in \eqref{cgt2} vanish because $m-1< n$. 

To prove \eqref{cgt_all}, we start with \eqref{gNB_final}. From the definition of $\tilde g_{\rm NB}(u;r,\ze)$ and $y=u-(1-\ze)$, we obtain
\begin{equation}
	\tilde g_{\rm NB}(u;r,\ze) = \sum_{k=0}^{r-1} u^k \left(\sum_{j=k}^{r-1} \binom{j}{k}(\ze-1)^{j-k} \frac{1}{1-\ze} c_g(r,j,\ze) \right)\,,
\end{equation}
where $c_g(r,j,\ze)$ is given in \eqref{c_g}. 

Binomial expansion and rearrangement yields (for given $r$ and $k$, and with $i=j-k$):
\begin{align}\label{sum_j}
	&\sum_{j=k}^{r-1} \binom{j}{k}(\ze-1)^{j-k} \frac{1}{1-\ze} c_g(r,j,\ze) \notag \\
	&\quad = \sum_{i=0}^{r-1-k} \binom{i+k}{k} \sum_{n=0}^{r-1+i} \ze^n \notag  \\
	&\qquad \times \sum_{l=\max\{0,n-i\}}^{\min\{r-1,n\}} (-1)^{i-n+l} \tbinom{i}{i-n+l} \biggl[\tbinom{r}{i+k+1}(r-l)-\tbinom{r+1}{i+k+2} \biggr] \,.
\end{align}
To simply the last sum (over $l$) we need to compute the following sums:
\begin{equation*}
	\ga_1(n,i) = \sum_{l=\max\{0,n-i\}}^{n} (-1)^{i-n+l} \tbinom{i}{i-n+l} \,, 
		\quad \ga_2(n,i) = \sum_{l=\max\{0,n-i\}}^{n} (-1)^{i-n+l} \tbinom{i}{i-n+l}l
\end{equation*}
if $0\le n \le r-1$, and 
\begin{equation*}
	\ga_3(r,n,i) = \sum_{l=\max\{0,n-i\}}^{r-1} (-1)^{i-n+l} \tbinom{i}{i-n+l} \,, 
		\quad \ga_4(r,n,i) =  \sum_{l=\max\{0,n-i\}}^{r-1} (-1)^{i-n+l} \tbinom{i}{i-n+l}l
\end{equation*}
if $r \le n \le r-1 + i$\,.


Straightforward algebra yields
\begin{subequations}
\begin{align}
	\ga_1(n,i) & = (-1)^{i-n}\tbinom{i-1}{n} \,, \\
	\ga_2(n,i) & = \begin{cases} 0 &\text{ if } n=0 \,, \\
	(-1)^{i-n-1}\tbinom{i-2}{n-1} &\text{ if } n>0  \,, 
	\end{cases} \\
	\ga_3(r,n,i) & = \begin{cases} 0 &\text{ if } i=0 \,, \\
	(-1)^{r-1-(n-i)}\tbinom{i-1}{n-r} &\text{ if } i >0 \,, 
	\end{cases} \\
	\ga_4(r,n,i) & = \begin{cases} 0 &\text{ if $ i\le1$ and $n>r$ } \,, \\
	   r-1 & \text{ if $i=1$ and $n=r$ } \,, \\
	   (-1)^{r-1-(n-i)} \frac{r i -n}{i-1} \tbinom{i-1}{n-r} &\text{ if $i>1$ }\,.
	\end{cases}
\end{align}
\end{subequations}
In order to simplify \eqref{sum_j}, we will have to determine the sum
\begin{align}
	&\sum_{i=0}^{r-1-k} \tbinom{i+k}{k} \sum_{n=0}^{r-1} \ze^n r \tbinom{r}{i+k+1} \ga_1(n,i) \notag \\
	&\quad = \sum_{n=0}^{r-1} \ze^n r \sum_{m=0}^{r-1-k} \tbinom{m+k}{k} \tbinom{r}{m+k+1} \ga_1(n,m) \,,
\end{align}
and analogously for $\ga_2$\,.

For the corresponding sums in \eqref{sum_j} with $\ga_3$ and $\ga_4$, which run from $n=r$ to $n=r-1+i$, we use 
\begin{equation}
	\sum_{i=0}^I a_i \sum_{n=N}^{N-1+i} \ze^n b_{n,i} = \sum_{n=N}^{N+I} \ze^n \sum_{m=n-N+1}^I a_m b_{n,m} \,,
\end{equation}
where $I=r-1-k$ and $N=r$. After adding the appropriate terms and simple rearrangement, we obtain the coefficients $\cgt(r,k,n)$ of $u^k\ze^n$ in the form \eqref{cgt_all}.
It follows that for given $r$, $k$, and $n$ the coefficients $c_{\tilde g_i}(r,k,n,m)$ alternate in sign as $m$ increases. 

Now we are ready to derive the representations of $\cgt(r,k,n)$ obtained in equations \eqref{cgt_final}. 
It is useful to keep the following in mind:
\begin{equation*}
	\binom{m-2}{n-1} = \begin{cases} (-1)^{n-1}n &\text{ if $m = 0$ and $n\ge0$}  \,, \\
				(-1)^{n-1} &\text{ if $m = 1$ and $n\ge1$}  \,, \\
				0 &\text{ if } n\ge m\ge 2 \,, \\
				1 &\text{ if  $n = 1$ or $n=m-1$} \,, \\
				>1 &\text{ if  $m \ge n+2$ and $n\ge2$} \,.	\end{cases}
\end{equation*}

STEP 2. To show \eqref{cgt_final_a}, we assume $0\le n \le r -k-1$. The starting point is \eqref{cgt_all}.Then
\begin{subequations}
\begin{equation}
	\sum_{m=0}^{r-1-k} \tbinom{m+k}{k} \tbinom{r}{m+k+1} (-1)^{m-n} \tbinom{m-1}{n} =  \tbinom{k+n+1}{k+1} =  \tbinom{k+n+2}{k+2} \frac{k+2}{k+n+2} \,,
\end{equation}
\begin{equation}
	\sum_{m=0}^{r-1-k} \tbinom{m+k}{k} \tbinom{r+1}{m+k+2} (-1)^{m-n} \tbinom{m-1}{n} =  \tbinom{k+n+2}{k+2} \frac{r(k+2) -k(k+n+2)-n }{k+n+2} \,,
\end{equation}
and
\begin{equation}\label{third_term}
	\sum_{m=0}^{r-1-k} \tbinom{m+k}{k} \tbinom{r}{m+k+1} (-1)^{m-n} \tbinom{m-2}{n-1} = \tbinom{k+n+2}{k+2} \frac{-n}{k+n+2} \,,
\end{equation}
\end{subequations}
where the latter holds only if $n\ge1$.
Applying these identities to $\cgt$ in \eqref{cgt} with $c_{\tilde g_1}$ in \eqref{cgt1}, and observing that $\tbinom{r+1}{m+k+2} = \tbinom{r}{m+k+1}\frac{r+1}{m+k+2}$,
we obtain the desired result:
\begin{align*}
	\cgt(r,k,n) &= \tbinom{k+n+2}{k+2}\Bigl[r \frac{k+2}{k+n+2} - \frac{r(k+2) -k(k+n+2)-n }{k+n+2} -  \frac{n}{k+n+2} \Bigr] \\
		&= k\tbinom{k+n+2}{k+2}\,.
\end{align*}
If $n=0$, the term in \eqref{third_term} is absent and $\cgt(r,k,0)=k$ follows.

STEP 3. Next we derive \eqref{cgt_final_b} and assume $r-k\le n \le r-1$. We start with the first case in \eqref{cgt} and the second in \eqref{cgt1}.
Because $\binom{m-1}{n}=0$ if $1\le m\le n$, we have $\sum\limits_{m=0}^{r-1-k} a(m,n) \binom{m-1}{n} = \sum\limits_{m=r-k+1}^{r-1-k} a(m,n) \binom{m-1}{n} = 0$ in this case. Therefore, only the summand with $m=0$ remains (because $n\ge 1$ whence $\binom{0}{n}=0$) and we obtain
\begin{subequations}
\begin{equation}\label{cg1_decrease_a}
	\sum_{m=0}^{r-1-k} \tbinom{m+k}{k} \tbinom{r}{m+k+1} (-1)^{m-n} \tbinom{m-1}{n}r  = r \tbinom{r}{k+1} (-1)^{-n} \tbinom{-1}{n} = r \tbinom{r}{k+1} \,.
\end{equation}
Analogously, and including the factor $\frac{r+1}{m+k+2}$ from \eqref{cgt1}, we find
\begin{equation}\label{cg1_decrease_b}
	\sum_{m=0}^{r-1-k} \tbinom{m+k}{k} \tbinom{r+1}{m+k+2} (-1)^{m-n} \tbinom{m-1}{n} = \tbinom{r+1}{k+2} (-1)^{-n} \tbinom{-1}{n} = \tbinom{r}{k+1}\frac{r+1}{k+2} \,.
\end{equation}

Because $\binom{m-2}{n-1}=0$ if $2\le m\le n$, we have $\sum\limits_{m=0}^{r-1-k} a(m,n) \binom{m-2}{n-1} = \sum\limits_{m=r-2-k}^{r-1-k} a(m,n) \binom{m-2}{n-1} = 0$ in this case. Therefore, only summands with $m=0$ or $m=1$ remain and we obtain
\begin{align}\label{cg1_decrease_c}
	&\sum_{m=0}^{r-1-k} \tbinom{m+k}{k} \tbinom{r}{m+k+1} (-1)^{m-n} \tbinom{m-2}{n-1} \notag \\
	&\qquad = 1\cdot\tbinom{r}{k+1} (-1)^{-n} \tbinom{-2}{n-1} + \tbinom{1+k}{k} \tbinom{r}{k+2} (-1)^{1-n}  \tbinom{-1}{n-1} \notag \\
	&\qquad = \tbinom{r}{k+1} (-1) n + (k+1)\tbinom{r}{k+2} \notag \\	
	&\qquad= \tbinom{r}{k+1} \Bigl(\frac{k+1}{k+2}(r-1-k)-n \Bigr)  \,.
\end{align}
\end{subequations}

Adding \eqref{cg1_decrease_a}, \eqref{cg1_decrease_b}, and \eqref{cg1_decrease_b} according to \eqref{cgt1} we obtain
\begin{equation}
	\cgt(r,k,n) = \binom{r}{k+1}\frac{(2r-k)(k+1)-(n+1)(k+2)}{k+2}
\end{equation}
if $r-k \le n \le r-1$.

STEP 4. Finally, we deduce \eqref{cgt_final_c} and assume $r \le n \le 2r-3-k$. We assume $r+1\le n \le 2r-3-k$ and set $s=n-r$. (We leave the verification of the simpler case $n=r$ to the reader.) We need to simplify the sums resulting from case two in \eqref{cgt} combined with \eqref{cgt1}. For given $n\ge r+1$ (so that $1\le s \le r-3-k$), the sum invoking $\binom{m-1}{n-r}$ becomes
\begin{subequations}
\begin{align}\label{term2_a}
	&\sum_{m=s+1}^{r-1-k} \tbinom{m+k}{k} \tbinom{r+1}{m+k+2} (-1)^{m-s} \tbinom{m-1}{s} \notag \\
	&\quad = -\tbinom{r+1}{k+2} + \Bigl(\frac{(k+2)(r-k)}{s}-(k+1)\Bigr)\tbinom{k+s+1}{k+2} \notag \\
	&\quad = -\tbinom{r}{k+1}\frac{r+1}{k+2} + \Bigl((r-k)-\frac{(k+1)s}{k+2}\Bigr)\tbinom{k+s+1}{k+1} \,.
\end{align}
The sum invoking $\binom{m-2}{n-r-1}$ becomes
\begin{align}\label{term2_b}
	&\sum_{m=s+1}^{r-1-k} \tbinom{m+k}{k} \tbinom{r}{m+k+1} (-1)^{m-s} \tbinom{m-2}{s-1} \notag \\
	&\quad = \tbinom{r}{k+2}\Bigl(\frac{s(k+2)}{r-k-1} - (k+1)  \Bigr) - \tbinom{k+s+1}{k+2} \notag \\
	&\quad = \tbinom{r}{k+1} \Bigl(s - \frac{(r-k-1)(k+1)}{k+2} \Bigr) - \tbinom{k+s+1}{k+1}\frac{s}{k+2} \,.
\end{align}
\end{subequations}
Substracting \eqref{term2_b} from \eqref{term2_a} and returning to $n=r+s$, we arrive
at \eqref{cgt_final_c}, which finishes the proof of the lemma.
\end{proof}

\vspace{0.5cm}
\noindent{\bf Additional materials} A \emph{Mathematica} notebook containing the code to check the algebraically demanding parts of the proof is made available upon request.

\singlespacing 

\end{document}